\documentclass{amsart}

\usepackage{amssymb,amsthm,epsfig,color,xfrac}
\usepackage{thm-restate}

\usepackage[colorlinks=true, linkcolor=blue, citecolor=green]{hyperref}
\usepackage{MnSymbol}

\usepackage{pinlabel}
\usepackage{palatino}

\usepackage[all]{xy}

\newcommand{\mc}[1]{\mathcal{#1}}

\newcommand{\co}{\colon\thinspace}


\usepackage{amsmath}
\newcommand{\rightQ}[2]{\left.\raisebox{.2em}{$#1$}\middle/\raisebox{-.2em}{$#2$}\right.}
\newcommand{\leftQ}[2]{\left.\raisebox{-.2em}{$#2$}\middle\backslash\raisebox{.2em}{$#1$}\right.}

\def\Stab {\mathrm{Stab}}

\newtheorem{theorem}{Theorem}[section]

\newtheorem{proposition}[theorem]{Proposition}

\newtheorem*{claim*}{Claim}

\theoremstyle{definition}

\newtheorem{definition}[theorem]{Definition}

%
%
%
%

\begin{document}

\title{Relative Cubulations and groups with a $2$--sphere boundary}

\author[D. Groves]{Daniel Groves}
\author[E. Einstein]{Eduard Einstein}
\address{Department of Mathematics, Statistics, and Computer Science,
University of Illinois at Chicago,
322 Science and Engineering Offices (M/C 249),
851 S. Morgan St.,
Chicago, IL 60607-7045}
\email{groves@math.uic.edu}
\email{einstein@uic.edu}

\begin{abstract}
We introduce a new kind of action of a relatively hyperbolic group on a CAT$(0)$ cube complex, called a relatively geometric action.  We provide an application to characterize finite-volume Kleinian groups in terms of action on cube complexes, analogous to the results of Markovic and Ha\"issinsky in the closed case.
\end{abstract}
\thanks{The second author was partially supported by the National Science Foundation, DMS-1507067.}

\maketitle

\section{Introduction}

The Cannon Conjecture (see \cite[Conjecture 11.34]{Cannon91}, \cite[Conjecture 5.1]{CannonSwenson}) is one of the central problems in geometric group theory. Using the work of Agol \cite{VH}, Markovic \cite[Theorem 1.1]{Markovic:Cannon} gave an approach to proving the Cannon Conjecture using CAT$(0)$ cube complexes and quasi-convex codimension $1$ surface subgroups.  This was slightly generalized by Ha\"{i}ssinsky \cite[Theorem 1.10]{Hais:Cannon}, who proved that a Gromov hyperbolic group whose boundary is a $2$--sphere is virtually Kleinian if and only if it acts properly and cocompactly on a CAT$(0)$ cube complex\footnote{In fact, Ha\"{i}ssinsky proved this result more generally for hyperbolic groups with planar boundary}.

A relative version of the Cannon Conjecture states that a relatively hyperbolic group (with abelian parabolic subgroups) whose (Bowditch) boundary is a $2$--sphere is a Kleinian group (see \cite[Problem 57]{kapovichproblems}, for example).  In \cite{GMS}, it was proved that the Relative Cannon Conjecture is implied by the Cannon Conjecture.

In this paper we introduce a new kind of action of a relatively hyperbolic group on a CAT$(0)$ cube complex, called a {\em relatively geometric} action (see Definition \ref{def:RG} below).  Contrary to proper and cocompact actions, whose coarse geometry is that of the Cayley graph, relatively geometric actions have the coarse geometry of the coned Cayley graph (see Proposition \ref{p:coned} below), and hence their geometry can be expected to more faithfully exhibit the relatively hyperbolic geometry of groups.

It follows from the work of Cooper--Futer \cite[Theorem 1.1]{CooperFuter} and the Sageev construction \cite{Sageev-Codim1} that if $M$ is a finite-volume hyperbolic $3$--manifold then $\pi_1(M)$ admits a relatively geometric action on a CAT$(0)$ cube complex (see Theorem \ref{t:M RG} below).

Applying the results from \cite{GMS}, \cite{Omnibus} and \cite{CooperFuter}, we prove the following relative version of Ha\"{i}ssinsky's result.

\begin{restatable}{theorem}{relhai} \label{t:RelHai}
Suppose that $(G,\mc{P})$ is relatively hyperbolic, that the elements of $\mc{P}$ are free abelian, and that the (Bowditch) boundary of $(G,\mc{P})$ is a $2$--sphere.  Then $G$ is Kleinian if and only if   $G$ admits a relatively geometric action on a CAT$(0)$ cube complex.
\end{restatable}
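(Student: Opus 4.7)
The forward implication is essentially immediate. If $G$ is Kleinian under the stated hypotheses on $(G,\mc{P})$, then (after passing to a torsion-free finite-index subgroup if necessary) $G = \pi_1(M)$ for a finite-volume hyperbolic $3$-manifold $M$, and the conclusion follows from Theorem \ref{t:M RG}.

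The converse is the substantive direction. My plan is to reduce to Ha\"{i}ssinsky's closed-boundary theorem via relatively hyperbolic Dehn filling. Suppose $G \actson X$ is a relatively geometric action. First I would invoke the Omnibus Theorem to deduce that $(G,\mc{P})$ is virtually (relatively) special, which supplies the subgroup separability needed to make Dehn filling compatible with the cubulation. Next I would perform sufficiently long relatively hyperbolic Dehn fillings $G \onto \bar{G}$: for generic long slopes, $\bar{G}$ is word-hyperbolic (by Osin / Groves--Manning) and inherits a proper cocompact action on a filled CAT(0) cube complex $\bar{X}$ (again using the Omnibus Theorem together with the separability). Moreover, because the peripheral subgroups are free abelian and the Bowditch boundary of $(G,\mc{P})$ is $S^2$, the parabolic points of $\partial(G,\mc{P})$ are capped off and the Gromov boundary $\partial \bar{G}$ is still $S^2$. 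Ha\"{i}ssinsky's theorem then applies to $\bar{G}$, yielding that $\bar{G}$ is virtually Kleinian. Finally, I would vary the filling slopes and take a geometric limit of the resulting Kleinian structures using the approximation argument of \cite{GMS}, producing a Kleinian structure on $G$ itself.

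The main obstacle will be ensuring that a single Dehn filling simultaneously preserves hyperbolicity, the $S^2$ boundary, and a cocompact cubical action. Each of these is mature theory in isolation, but their interaction is delicate; this is precisely where the relatively geometric hypothesis, combined with the Omnibus Theorem, is designed to pay off, since relatively geometric actions are tailored to descend along sufficiently long fillings. A secondary subtlety is passing from ``virtually Kleinian'' on each $\bar{G}$ back to ``Kleinian'' on $G$, for which the geometric approximation scheme from \cite{GMS} will be essential, and for which the abelian parabolic assumption is what makes the filled Kleinian structures on $\bar{G}$ converge coherently.
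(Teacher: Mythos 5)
Your overall strategy matches the paper's: perform long Dehn fillings with cyclic kernels, show the quotients are hyperbolic with $S^2$ boundary and virtually special, apply Ha\"{i}ssinsky, and recover a discrete faithful representation of $G$ by the limiting argument of \cite{GMS}. However, two steps as you describe them would fail. First, the filled group $\overline{G} = \rightQ{G}{K}$ does \emph{not} inherit a proper cocompact action on $\leftQ{X}{K}$: the filling kernels must be cyclic (hence infinite-index in the rank-two free abelian parabolics) for the quotient boundary to remain $S^2$, so the images of the peripheral subgroups are infinite virtually cyclic groups stabilizing cells, and the quotient action is cocompact but improper. This is precisely why the paper routes through \cite[Theorem D]{Omnibus} (hyperbolic groups acting \emph{improperly}): a cocompact action with virtually cyclic --- hence quasiconvex and virtually special --- stabilizers implies $\overline{G}$ is virtually special, and only then does one obtain a proper cocompact cubulation to feed into Ha\"{i}ssinsky's theorem. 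Relatedly, your opening move of deducing that $(G,\mc{P})$ itself is ``virtually relatively special'' in order to supply separability is neither justified by the results quoted in the paper nor needed; the paper instead applies \cite[Corollary 6.5]{Omnibus} (Proposition \ref{prop:fill CAT(0)}) directly to conclude that $\leftQ{X}{K}$ is CAT$(0)$ for sufficiently long fillings, with no separability input.

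Second, you must upgrade ``virtually Kleinian'' to ``Kleinian'' for each individual quotient \emph{before} the limiting argument: the representations fed into the convergence scheme of \cite{GMS} are representations of $G$ obtained by composing $G \onto G_i$ with an embedding $G_i \into \mathrm{Isom}(\mathbb{H}^3)$, and a Kleinian structure on a finite-index subgroup of $G_i$ does not provide this. The mechanism is concrete: free abelian parabolics force $G$ to have no nontrivial finite normal subgroup, \cite[Theorem 7.2]{rhds} transfers this to sufficiently long fillings $G_i$, so each $G_i$ acts faithfully on its boundary $2$--sphere and virtual Kleinian-ness promotes to Kleinian-ness. You flag this subtlety but fold it into the limiting scheme, where it does not belong. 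A minor further point: in the forward direction, passing to a torsion-free finite-index subgroup only produces a relatively geometric action of that subgroup, not of $G$ itself, so this reduction needs justification if $G$ is not already assumed torsion-free.
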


\section{Relative cubulations}

The theory of hyperbolic groups acting properly and cocompactly on CAT$(0)$ cube complexes is by now well developed (see \cite{Wise,HaglundWise12,HsuWise15,AgolGrovesManning-alternateMSQT,Omnibus}, etc.).  In the relatively hyperbolic situation, two generalizations have been previously studied: proper and cocompact actions (as in \cite{SageevWise15,Wise}) and proper and `cosparse' actions (see \cite{hruskawise:finiteness,SageevWise15}).

The following definition provides another condition for relatively hyperbolic groups which restricts to being proper and cocompact in case $G$ is hyperbolic and $\mc{P} = \emptyset$.

\begin{definition} \label{def:RG}
 Suppose that $(G,\mc{P})$ is a group pair.  A (cellular) action of $G$ on a cell complex $X$ is {\em relatively geometric (with respect to $\mc{P}$)} if 
\begin{enumerate}
\item  $\leftQ{X}{G}$ is compact;
\item Each element of $\mc{P}$ acts elliptically on $X$; and
\item Each stabilizer in $G$ of a cell in $X$ is either finite or else conjugate to a finite-index subgroup of $\mc{P}$.
\end{enumerate}
\end{definition}

In Section \ref{s:man} below we give natural examples of relatively geometric actions on CAT$(0)$ cube complexes, provided by the work of Cooper and Futer \cite{CooperFuter}. 

The authors will investigate relatively geometric actions of relatively hyperbolic groups on CAT$(0)$ cube complexes in future work.  For the remainder of this section, we record some basic features of relatively geometric actions.

The following is an immediate consequence of \cite[Theorem 5.1]{CharneyCrisp}.
\begin{proposition} \label{p:coned}
Suppose that $(G,\mc{P})$ is relatively hyperbolic, and that $G$ admits a relatively geometric action on a CAT$(0)$ cube complex $X$.  Then $X$ is quasi-isometric to the coned-off Cayley graph of $(G,\mc{P})$, and consequently is $\delta$--hyperbolic for some $\delta$.
\end{proposition}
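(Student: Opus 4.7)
\emph{Plan.} The statement is asserted to be an immediate consequence of \cite[Theorem 5.1]{CharneyCrisp}, so the plan is to verify that the three clauses of Definition \ref{def:RG} exactly match the hypotheses of that theorem and then to transfer hyperbolicity across the resulting quasi-isometry.

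\emph{Verifying the hypotheses.} The Charney--Crisp result is a relative version of the Milnor--\v{S}varc lemma: it takes as input a pair $(G,\mc{P})$ acting by isometries on a geodesic metric space $Y$ such that (i) the quotient $\leftQ{Y}{G}$ is compact; (ii) each $P\in\mc{P}$ has a bounded orbit (equivalently, a fixed point); and (iii) the point stabilizers that are not finite are precisely the conjugates of finite-index subgroups of the $P\in\mc{P}$. Its conclusion is a $G$-equivariant quasi-isometry from $Y$ to the coned-off Cayley graph of $(G,\mc{P})$. Clauses (1), (2) and (3) of Definition \ref{def:RG} are precisely these three conditions, once one observes that an elliptic cellular isometry of a CAT(0) cube complex fixes the barycenter of some cube (by Bruhat--Tits applied to the CAT(0) metric on a cube), so the cell stabilizer of Definition \ref{def:RG}(3) is realized as the $\Stab$ of a genuine point. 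With this identification the quasi-isometry is produced directly.

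\emph{Concluding hyperbolicity.} Since $(G,\mc{P})$ is assumed relatively hyperbolic, its coned-off Cayley graph is $\delta'$-hyperbolic (this is part of Bowditch's / Osin's formulation of relative hyperbolicity). Quasi-isometry invariance of Gromov-hyperbolicity for geodesic spaces then gives that $X$ is $\delta$-hyperbolic for some $\delta$.

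\emph{Main obstacle.} The content is entirely in \cite[Theorem 5.1]{CharneyCrisp}; the remainder is bookkeeping. The only subtle point worth flagging is that a relatively geometric cubulation need not produce a locally finite (hence proper) CAT(0) cube complex, since vertices stabilized by an infinite peripheral subgroup can be incident to infinitely many edges. One must therefore invoke a version of Milnor--\v{S}varc that accommodates non-proper $Y$ provided peripheral stabilizers have bounded orbits, which is precisely what Charney--Crisp's theorem supplies.
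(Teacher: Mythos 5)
Your proposal is correct and follows exactly the route the paper intends: the paper offers no written proof, simply declaring the proposition an immediate consequence of \cite[Theorem 5.1]{CharneyCrisp}, and your write-up just makes explicit the matching of Definition \ref{def:RG}'s three clauses with that theorem's hypotheses and the transfer of hyperbolicity across the quasi-isometry. The only minor quibble is that an elliptic subgroup fixes the circumcenter of a bounded orbit, which need not be a barycenter, but since the action is cellular that fixed point's supporting cube is preserved setwise, so your identification of point stabilizers with cell stabilizers still goes through.
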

 
The following is an immediate consequence of \cite[Corollary 6.5]{Omnibus}.  See \cite[$\S6$]{Omnibus} for the definition of $\mc{Q}$--fillings, and more context.
\begin{proposition} \label{prop:fill CAT(0)}
Suppose that $(G,\mc{P})$ is relatively hyperbolic and that $G$ admits a relatively geometric action on a CAT$(0)$ cube complex $X$.  Let $\mc{Q}$ be a collection of finite-index subgroups of elements of $\mc{P}$ so that any infinite cell stabilizer contains a conjugate of an element of $\mc{Q}$.  For sufficiently long $\mc{Q}$--fillings 
\[	G \to \overline{G} = \rightQ{G}{K}	\]
of $(G,\mc{P})$, the quotient $\leftQ{X}{K}$ is a CAT$(0)$ cube complex.
\end{proposition}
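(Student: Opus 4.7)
Since the proposition is flagged as an immediate consequence of \cite[Corollary 6.5]{Omnibus}, the proof plan is to verify the hypotheses of that corollary in our setting and invoke it. Omnibus 6.5 is a Dehn-filling statement for relatively hyperbolic group actions on CAT(0) cube complexes: given such an action with controlled stabilizers and a family of peripheral subgroups $\mc{Q}$ that ``covers'' the infinite stabilizers up to conjugacy, sufficiently long $\mc{Q}$-fillings yield CAT(0) cube complex quotients.

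First I would record that Definition \ref{def:RG} supplies exactly the input data required. The $G$-action on $X$ is cellular with $\leftQ{X}{G}$ compact, and each cell stabilizer is either finite or conjugate to a finite-index subgroup of some $P \in \mc{P}$. Cocompactness ensures that there are only finitely many $G$-conjugacy classes of maximal cell stabilizers, a feature that will let the ``sufficiently long'' condition be applied uniformly. The hypothesis placed on $\mc{Q}$ in Proposition \ref{prop:fill CAT(0)} is precisely the coverage condition imposed by Omnibus 6.5 on the infinite cell stabilizers.

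Next I would make the ``sufficiently long'' clause explicit. By cocompactness there is a finite set $F \subset G \setminus \{1\}$ of representatives of the nontrivial elements of the finite cell stabilizers, together with finitely many coset representatives controlling how the peripheral-type stabilizers sit inside their ambient peripherals. Requiring the $\mc{Q}$-filling kernel $K$ to avoid $F$ and to satisfy the Dehn-filling length threshold from \cite{Omnibus} ensures that $K$ meets each finite stabilizer trivially and each infinite stabilizer in a finite-index subgroup of the appropriate shape, so that the cell structure on $X$ descends to $\leftQ{X}{K}$ and the resulting quotient is again a CAT(0) cube complex by Omnibus 6.5.

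The main step, though essentially a bookkeeping one, is the translation between Definition \ref{def:RG} and the precise hypotheses of Omnibus 6.5, together with the confirmation that a single ``sufficiently long'' threshold suffices across all finitely many stabilizer orbit types. Once this matching is made, the conclusion is immediate.
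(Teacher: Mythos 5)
Your proposal matches the paper exactly: the paper offers no argument beyond the observation that the statement is an immediate consequence of \cite[Corollary 6.5]{Omnibus}, and your plan of matching the data supplied by Definition \ref{def:RG} (cocompactness, elliptic peripherals, finite-or-peripheral stabilizers) and the coverage hypothesis on $\mc{Q}$ to the hypotheses of that corollary is precisely the intended reading. The additional bookkeeping you describe about the ``sufficiently long'' threshold is reasonable and consistent with how such filling statements are applied.
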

Proposition \ref{prop:fill CAT(0)} provides one major benefit of relatively geometric actions over proper and cocompact (or proper and cosparse) actions.  Namely, if the images of the elements of $\mc{P}$ in $\rightQ{G}{K}$ are hyperbolic and virtually special (for example, finite or virtually cyclic) then \cite[Theorem D]{Omnibus} implies that $\rightQ{G}{K}$ is virtually special.  This allows one to prove properties of $G$ by taking virtually special hyperbolic  Dehn fillings $\rightQ{G}{K}$ and applying the properties of virtually special hyperbolic groups.  This technique is used in the proof of Theorem \ref{t:RelHai} below, and will be crucial in our future work.

{\em Relatively quasi-convex subgroups} of relatively hyperbolic groups were investigated in \cite{HruskaQC}.  See that paper for many equivalent definitions, or \cite{agm,QC-DF} for yet more equivalent definitions.  A relatively quasi-convex subgroup $H$ of a relatively hyperbolic group $(G,\mc{P})$ is {\em full} if for any $g \in G$ and $P \in \mc{P}$, the subgroup $H^g \cap P$ is either finite, or of finite-index in $P$.

\begin{proposition} \label{p:P-ell}
Suppose that $(G,\mc{P})$ is relatively hyperbolic and that $\mc{H}$ is a finite collection of full relatively quasi-convex codimension $1$ subgroups.  Then each one-ended element of $\mc{P}$ acts elliptically on the cube complex dual to $\mc{H}$.
\end{proposition}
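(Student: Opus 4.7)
The plan is to show that $P$ fixes a vertex of the dual cube complex $X$, which, since $X$ is a complete CAT$(0)$ space, is equivalent to showing $P$ has a bounded orbit. By Sageev's construction, a vertex of $X$ is a consistent orientation of the walls (equivalently, an ultrafilter on the pocset of halfspaces satisfying the descending-chain condition), and the walls are the cosets $gH$ for $H \in \mc{H}$, each with halfspaces $gH^{\pm}$ in the Cayley graph.

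I would construct the $P$-invariant orientation pointing ``toward $\xi_P$,'' the unique parabolic fixed point of $P$ in the Bowditch boundary $\partial(G,\mc{P})$. Fullness splits the walls into two types. \emph{Type A} walls have $P \cap gHg^{-1}$ finite; by standard facts about limit sets of relatively quasi-convex subgroups (see \cite{HruskaQC}), $\xi_P \notin \Lambda(gHg^{-1})$, so $\xi_P$ lies strictly inside one halfspace $gH^{\pm}$, which is the chosen halfspace. This choice is $P$-invariant because $P$ fixes $\xi_P$. \emph{Type B} walls have $P \cap gHg^{-1}$ of finite index in $P$; a double-coset argument combined with relative quasi-convexity shows that Type B walls form only finitely many $P$-orbits, each of which is itself a finite set (its $P$-stabilizer has finite index in $P$). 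Passing to a finite-index subgroup $P' \leq P$ (harmless: $P$ acts elliptically if and only if $P'$ does) renders each Type B wall individually $P'$-fixed, so any choice of orientation on them is $P'$-equivariant.

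I would then verify that the resulting orientation specifies a valid vertex of $X$ by realizing it as the limit of the standard $X$-vertices $g_n \cdot x_0$ for a sequence $g_n \in G$ converging to $\xi_P$. Each $g_n \cdot x_0$ is a valid vertex of $X$; for Type A walls, the $g_n$-orientation eventually agrees with the $\xi_P$-orientation as soon as $g_n$ lies in the $\xi_P$-halfspace; for each of the finitely many Type B wall orbits, choosing $g_n$ along a single coset of $P'$ makes the $g_n$-orientation eventually constant. The requisite finiteness statement---that only finitely many walls separate $g_n \cdot x_0$ from $g_m \cdot x_0$ for large $m, n$---follows from $g_n$ being confined to a single $P'$-coset in a horoball neighborhood of $\xi_P$. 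The limit is therefore a valid vertex of $X$, and is $P'$-fixed.

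The main obstacle, and where one-endedness of $P$ enters, is ensuring that the $\xi_P$-orientation is consistent with $P$-boundedness on Type A walls. If some Type A wall $W = gH$ had $P$-unbounded orbit on both halfspaces, then the finite $P$-stabilizer $P \cap gHg^{-1}$ together with the quasi-convexity of $W$ would furnish a bounded separator of $P$ into two unbounded pieces---an end splitting of $P$ over a finite subgroup, contradicting one-endedness via Stallings. Hence cofinitely much of $P$'s orbit lies in the $\xi_P$-halfspace of every Type A wall, which is precisely what makes the limit-of-vertices argument converge to a vertex (not merely a Roller boundary point) of $X$.
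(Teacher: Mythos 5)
Your strategy is genuinely different from the paper's. The paper argues by contradiction in a few lines: if $P$ is not elliptic, some hyperplane $W$ has orbit points of $P$ arbitrarily far away on both sides, so $\Stab(W)\cap P$ is a codimension~$1$ subgroup of $P$; one-endedness forces this subgroup to be infinite, fullness then forces it to be of finite index in $P$, whence the orbit lies in a bounded neighborhood of $W$ --- contradicting the choice of $W$. You instead build an explicit $P$-fixed vertex as a Bergeron--Wise-style ultrafilter oriented ``toward $\xi_P$.'' Both arguments turn on the same dichotomy (fullness plus one-endedness of $P$ rules out walls that essentially cut $P$ with finite $P$-stabilizer), but the paper's version avoids all of the machinery you invoke: limit sets of relatively quasi-convex subgroups, double-coset finiteness, and the Roller-boundary formalism. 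What your construction buys is an explicit invariant vertex rather than a mere boundedness statement, but at a real cost in length and in the amount of supporting theory that must be verified.

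That said, as written your argument has gaps at exactly its technical heart. First, the claim that the limit orientation is a genuine vertex requires that it differ from $\omega_{x_0}$ on only finitely many walls; the reason you give (finiteness of the set of walls separating $g_nx_0$ from $g_mx_0$ for $g_n,g_m$ in one $P'$-coset) is automatic for each \emph{fixed} pair $m,n$ and does not by itself give the needed uniformity. What you actually need is that only finitely many Type~A walls separate the basepoint from $\xi_P$, and the correct mechanism is bounded horoball penetration: a full relatively quasi-convex coset $gH$ with $P\cap gHg^{-1}$ finite meets a horoball based at $\xi_P$ in a set of bounded depth, so any wall separating $x_0$ from $\xi_P$ must pass near a fixed compact initial segment of a ray to $\xi_P$, and only finitely many cosets do so. This needs to be stated and proved (or precisely cited), not asserted. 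Second, the finiteness of the set of Type~B walls rests on the fact that only finitely many double cosets $PgH$ satisfy $|P\cap gHg^{-1}|=\infty$; this is a genuine theorem about relatively quasi-convex subgroups (of bounded-packing type, as in Hruska--Wise) and should be cited. Third, a minor repair: a subgroup of $P'$ stabilizing a Type~B wall may still interchange its two halfspaces, so you must pass further to the index-at-most-two halfspace stabilizers before claiming any orientation of those walls is $P'$-equivariant. With these points filled in the construction does work, but the paper's contradiction argument reaches the same conclusion with far less overhead.
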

\begin{proof}
Let $\mc{H}$ be a finite collection of full relatively quasi-convex codimension $1$ subgroups of $G$ and let $X$ be a CAT$(0)$ cube complex  dual to $\mc{H}$ obtained by the Sageev construction.  In order to obtain a contradiction, suppose that $P \in \mc{P}$ is one-ended and that $P$ does not act elliptically on $X$.  Then any orbit $P . x$ of $P$ in $X$ is unbounded, and for any such orbit there is a hyperplane $W$ in $X$ so that there are elements of $P.x$ on either side of $W$, arbitrarily far from $W$.

It is straightforward to see that $\Stab(W) \cap P$ is a codimension $1$ subgroup of $P$.  Since $P$ is one-ended, every codimension $1$ subgroup of $P$ is infinite.  But $\Stab(W)$ is full so $\Stab(W) \cap P$ is finite-index in $P$.  It follows that any orbit $P . x$ is contained in a bounded neighborhood of $W$, contradicting our choice of $W$.
\end{proof}

For a hyperbolic group $G$, Sageev \cite[Theorem 3.1]{Sageev-Codim1} proved that the cube complex associated to a finite collection of quasi-convex codimension $1$ subgroups is $G$--cocompact.  The following is the appropriate relatively geometric version, and follows quickly from results of Hruska--Wise \cite{hruskawise:finiteness}.

\begin{proposition} \label{p:cocompact}
Suppose that $(G,\mc{P})$ is relatively hyperbolic, that each element of $\mc{P}$ is one-ended, and that $\mc{H}$ is a finite collection of full relatively quasi-convex codimension $1$ subgroups.  Then the action of $G$ on the cube complex dual to $\mc{H}$ is $G$--cocompact.
\end{proposition}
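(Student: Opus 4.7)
The plan is to combine the cosparseness result of Hruska--Wise \cite{hruskawise:finiteness} with the ellipticity of peripheral subgroups provided by Proposition~\ref{p:P-ell}. The Hruska--Wise machinery shows that for $(G,\mc{P})$ relatively hyperbolic and $\mc{H}$ a finite collection of relatively quasi-convex codimension $1$ subgroups, the $G$--action on the dual CAT$(0)$ cube complex $X$ is cosparse: there is a $G$--invariant subcomplex $Y\subseteq X$ with $G\backslash Y$ compact, and $X\setminus Y$ is an equivariant disjoint union of cubical ``horoball-like'' regions $\{B_\alpha\}$ permuted by $G$ with finitely many orbits, such that each $\Stab_G(B_\alpha)$ is conjugate to an element of $\mc{P}$ and acts cocompactly on $B_\alpha$.

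I would then invoke Proposition~\ref{p:P-ell}, which applies since each $P\in\mc{P}$ is one-ended, to conclude that each $\Stab_G(B_\alpha)$ acts elliptically on $X$. An elliptic action on a CAT$(0)$ cube complex fixes a cube and thus has bounded orbits, so $\Stab_G(B_\alpha)$ has bounded orbits in the ambient $X$. Combined with the fact that $\Stab_G(B_\alpha)$ acts cocompactly on $B_\alpha$, this forces $B_\alpha$ itself to have bounded diameter. Since there are only finitely many $G$--orbits of such $B_\alpha$, the complex $X$ is the $G$--orbit of $Y\cup \bigcup_\alpha B_\alpha$, a finite enlargement of $Y$ by finitely many $G$--orbits of bounded subcomplexes; hence $G\backslash X$ is compact.

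The main obstacle I anticipate is marshalling Hruska--Wise's cosparseness statement in exactly the needed form, in particular ensuring that the stabilizers of the horoball regions are genuinely conjugates of the peripheral subgroups in $\mc{P}$ (rather than, say, proper subgroups thereof, which would only let one apply Proposition~\ref{p:P-ell} after an intermediate step). The fullness hypothesis on $\mc{H}$ is already used in Proposition~\ref{p:P-ell} to ensure the correct behavior of peripheral intersections with hyperplane stabilizers, so no further use of fullness should be required beyond that invocation. Once the cosparse decomposition and the identification of horoball stabilizers with peripheral conjugates are in hand, the collapse to an honestly cocompact action is immediate from the elliptic action of peripherals.
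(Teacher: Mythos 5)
There is a genuine gap, and it sits exactly where you flagged your ``main obstacle'': the form of the Hruska--Wise statement you invoke is not available. What \cite[Theorem 7.12]{hruskawise:finiteness} actually provides is \emph{relative} cocompactness: $X = GY \cup \bigcup g X_P$, where $\leftQ{Y}{G}$ is compact and each peripheral piece is a copy of the cube complex dual to the wallspace that $\mc{H}$ induces on $P \in \mc{P}$. It does \emph{not} assert that $\Stab_G(B_\alpha)$ acts cocompactly on $B_\alpha$; that assertion fails for non-full cubulations (for instance, a cusped hyperbolic $3$--manifold group cubulated by surfaces with accidental parabolics, where a peripheral piece can be an unbounded cubulated Euclidean space on which $\mathbb{Z}^2$ acts as a lattice in a proper subspace), so it cannot be part of the general machinery. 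Notice also that if your item asserting cocompactness of the $\Stab_G(B_\alpha)$--action on $B_\alpha$ were available, you would already be done: a cocompact core together with finitely many $G$--orbits of pieces, each with compact stabilizer-quotient, has compact $G$--quotient, and the subsequent appeal to Proposition \ref{p:P-ell} and the boundedness argument would be redundant. That the ellipticity step does no logical work is a symptom that the content of the proposition has been absorbed into the hypothesis.

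The paper closes this gap differently, and this is where fullness is genuinely used --- so your accounting, that fullness is needed only inside Proposition \ref{p:P-ell}, is wrong for this proposition. Fullness implies that each wall meets a given $P \in \mc{P}$ either in a finite subgroup or in a finite-index subgroup, so the induced wallspace on $P$ has a \emph{finite} dual cube complex; feeding this into \cite[Theorem 7.12]{hruskawise:finiteness} makes every peripheral piece compact and yields $G$--cocompactness of $X$ immediately (Proposition \ref{p:P-ell} is not needed here at all). Finally, the repair you might hope for --- using ellipticity to bound the peripheral pieces --- is not immediate either: a bounded $P$--orbit in $X$ does not by itself bound the $P$--invariant peripheral piece, which could a priori be an infinite complex on which $P$ acts with a fixed point. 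The clean route is to control the induced peripheral wallspace directly via fullness, as the paper does.
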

\begin{proof}
The condition that elements of $\mc{H}$ are full implies that the cubulation of each element of $\mc{P}$ induced by $\mc{H}$ (in any variation, see \cite{hruskawise:finiteness}) is a finite cube complex.  The result now follows immediately from \cite[Theorem 7.12]{hruskawise:finiteness}.
\end{proof}

The following result is a slight variation of \cite[Theorem 5.1]{bergeronwise}, and provides a useful criterion for actions to be relatively geometric.

\begin{theorem} \label{t:Rel BW}
Let $(G,\mc{P})$ be relatively hyperbolic and suppose that for every pair of distinct points $u,v \in \partial(G,\mc{P})$ there is a full relatively quasi-convex codimension $1$ subgroup $H$ of $G$ so that $u,v$ lie in $H$--distinct components of $\partial G \smallsetminus \Lambda H$.  Then there exist finitely many full relatively quasi-convex codimension $1$ subgroups of $G$ so that the action of $G$ on the dual cube complex is relatively geometric.
\end{theorem}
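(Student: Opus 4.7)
The plan is to adapt the proof of Bergeron--Wise \cite[Theorem 5.1]{bergeronwise} to the relatively hyperbolic setting: first extract a finite separating family by compactness, then apply Sageev's construction and verify Definition \ref{def:RG}.

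For the extraction step, the hypothesis associates to each distinct pair $(u,v) \in (\partial(G,\mc{P}))^2$ a full relatively quasi-convex codimension $1$ subgroup $H_{u,v}$ together with a pair of complementary components of $\partial(G,\mc{P}) \smallsetminus \Lambda H_{u,v}$ that separate $u$ from $v$. Being separated by some $G$--translate of a fixed pair of halfspaces is an open condition on pairs. I would exploit the fact that $G$ acts on $\partial(G,\mc{P})$ as a geometrically finite convergence group---in particular cocompactly on pairs of distinct points outside horospherical neighborhoods of parabolic fixed points---to pass to a finite subcover, producing a finite collection $\mc{H} = \{H_1,\dots,H_n\}$ whose $G$--translates separate every pair of distinct boundary points.

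Let $X$ be the CAT$(0)$ cube complex dual to $\mc{H}$ obtained from the Sageev construction. Cocompactness of the $G$--action follows from Proposition \ref{p:cocompact} when peripherals are one-ended, and more generally from \cite[Theorem 7.12]{hruskawise:finiteness}, using that fullness guarantees each induced peripheral cubulation is finite. Proposition \ref{p:P-ell} handles ellipticity for one-ended peripherals, and finite peripherals are elliptic trivially. For the cell stabilizers, each is the intersection of finitely many hyperplane stabilizers and therefore is itself a full relatively quasi-convex subgroup. Such a cell stabilizer fixes a bounded subset of $X$, so by Proposition \ref{p:coned} it has bounded orbit in the coned-off Cayley graph of $(G,\mc{P})$; standard facts about relatively hyperbolic groups then imply it is either finite or conjugate into some $P \in \mc{P}$, and fullness forces the infinite case to be finite-index in $P^g$, confirming Definition \ref{def:RG}(3).

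The principal obstacle is the compactness step used to produce $\mc{H}$. Unlike the hyperbolic setting, $G$ does not act cocompactly on pairs of distinct points in $\partial(G,\mc{P})$, because pairs can escape to a common parabolic fixed point. Overcoming this requires a uniform horoball analysis at each parabolic fixed point $p$: one must show that the hypothesis, applied near $p$, produces a single subgroup $H$ whose halfspace isolates $p$ from an entire neighborhood of potential partners, so that one element of $\mc{H}$ can simultaneously handle all pairs near $p$. Merging this horoball analysis with the convergence-group cocompactness outside the horoballs is where the real technical work lies.
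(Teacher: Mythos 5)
Your skeleton matches the paper's: both arguments observe that cocompactness and peripheral ellipticity hold for \emph{any} finite collection of full relatively quasi-convex codimension $1$ subgroups, by Propositions \ref{p:cocompact} and \ref{p:P-ell}, and both defer the rest to an adaptation of the proof of \cite[Theorem 5.1]{bergeronwise}. But your treatment of the remaining condition, Definition \ref{def:RG}(3), has two real problems. The first is a circularity: you verify that cell stabilizers are finite or parabolic by invoking Proposition \ref{p:coned} to say they have bounded orbits in the coned-off Cayley graph, but Proposition \ref{p:coned} \emph{assumes} the action is already relatively geometric --- in particular it assumes the very stabilizer condition you are proving. (The auxiliary claim that a cell stabilizer is an intersection of finitely many hyperplane stabilizers, hence relatively quasi-convex, is also not immediate for vertices of the Sageev complex, whose stabilizers must preserve an entire ultrafilter.) The paper instead phrases condition (3) as the statement that the chosen walls ``cut'' every loxodromic element: if each loxodromic skewers some wall, then no cell stabilizer contains a loxodromic, so the convergence-group dichotomy makes each stabilizer finite or parabolic, and fullness handles the finite-index requirement.

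The second problem is that you have set yourself a harder compactness problem than the theorem requires, and you leave it unsolved. You aim to separate \emph{all} pairs of distinct points of $\partial(G,\mc{P})$ with finitely many walls, correctly note that $G$ does not act cocompactly on distinct pairs because pairs can degenerate to a parabolic fixed point, and then declare the needed ``horoball analysis'' to be the open technical core. The paper's reduction sidesteps exactly this: since cocompactness and ellipticity already hold for any finite collection, one only needs finitely many walls separating the pairs $(g^{-},g^{+})$ for $g$ loxodromic. Those are pairs of \emph{conical} limit points, so each such pair can be translated by $G$ into a fixed compact region (a bi-infinite geodesic with both endpoints conical meets the thick part of the cusped space), and the Bergeron--Wise finiteness argument applies essentially verbatim; the degeneration to a parabolic point that blocks your approach simply does not occur for the pairs that matter. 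Moreover, your proposed fix --- a single wall isolating a parabolic point $p$ from an entire neighborhood of partners --- is not something the hypothesis provides and is likely false, since $\Lambda H$ can accumulate on $p$. As written, then, the proposal identifies the right ingredients but does not close the argument.
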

\begin{proof}
For {\em any} finite collection of full quasi-convex codimension $1$ subgroups, the action on the dual cube complex is $G$--cocompact by Proposition \ref{p:cocompact}, and elements of $\mc{P}$ act elliptically by Proposition \ref{p:P-ell}.  Therefore, it remains to prove that there is a finite collection of full relatively quasi-convex codimension $1$ subgroups with respect to which the stabilizers for the dual cube complex are finite or parabolic.
Thus, we need to show that there is a finite collection of such subgroups which `cut' each loxodromic element of $G$.  This can be achieved by applying the proof of \cite[Theorem 5.1]{bergeronwise} directly.
\end{proof}

\section{Finite-volume hyperbolic $3$--manifolds and relatively geometric actions}\label{s:man}
In this section, we explain how the works of Cooper--Futer \cite{CooperFuter} and Bergeron--Wise \cite{bergeronwise} together imply the following result.

\begin{theorem} \label{t:M RG}
Suppose that $M$ is a finite-volume hyperbolic $3$--manifold.  Then $\pi_1(M)$ admits a relatively geometric action on a cube complex.
\end{theorem}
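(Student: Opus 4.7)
The plan is to split into cases depending on whether $M$ is closed or cusped and then, in both cases, to produce enough full relatively quasi-convex codimension $1$ subgroups to invoke Theorem \ref{t:Rel BW}.

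If $M$ is closed then $\pi_1(M)$ is Gromov hyperbolic and $\mc{P} = \emptyset$, so a relatively geometric action is just a proper cocompact action. In this case Kahn--Markovic produces abundant closed quasi-Fuchsian surface subgroups, Bergeron--Wise \cite{bergeronwise} promotes them to a codimension $1$ family separating every pair of boundary points, and Sageev's construction gives a proper cocompact action on the dual CAT$(0)$ cube complex. I would cite this as the base case and spend the rest of the argument on the cusped case.

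So assume $M$ has cusps. Then $(\pi_1(M),\mc{P})$ is relatively hyperbolic where $\mc{P}$ is the collection of peripheral $\mb{Z}^2$ subgroups; geometric finiteness implies that the Bowditch boundary $\partial(\pi_1(M),\mc{P})$ is identified with the sphere at infinity $S^2$ of $\mb{H}^3$. The idea is to feed Theorem \ref{t:Rel BW} with \emph{closed} quasi-Fuchsian surface subgroups supplied by Cooper--Futer \cite{CooperFuter}. Because such an $H$ is a closed surface group, $H \cap gPg^{-1}$ has no parabolics and is therefore finite for every conjugate of every $P \in \mc{P}$; hence $H$ is automatically full. Quasi-Fuchsian surface subgroups of a geometrically finite Kleinian group are relatively quasi-convex, and they are codimension $1$ because their limit circle separates $S^2$.

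The main step, and the one I expect to be the real obstacle, is the separation condition in the hypothesis of Theorem \ref{t:Rel BW}: for every pair $u \neq v$ in $S^2$, produce a closed quasi-Fuchsian $H$ whose limit circle has $u,v$ in $H$--distinct complementary components. This is where the ``ubiquity'' of Cooper--Futer surfaces is crucial. Given $u,v$, choose a bi-infinite geodesic $\ell \subset \mb{H}^3$ that separates $u$ from $v$ on $S^2$; by perturbing if necessary one may take $\ell$ to project to a closed geodesic (or an arbitrarily good approximation thereof) in $M$. Applying \cite[Theorem 1.1]{CooperFuter} to this closed geodesic gives a closed immersed quasi-Fuchsian surface whose lifts have components $C^1$--close to the totally geodesic plane orthogonal to $\ell$; choosing the approximation fine enough forces the boundary circle of one such component to separate $u$ from $v$, and then the Bergeron--Wise doubling/crossing trick from \cite[Theorem 5.1]{bergeronwise} upgrades this to $H$--distinct separation. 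The output of this verification is precisely the hypothesis of Theorem \ref{t:Rel BW}.

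Assuming the separation condition is established, Theorem \ref{t:Rel BW} immediately produces a finite collection of full relatively quasi-convex codimension $1$ subgroups so that the action of $\pi_1(M)$ on the dual CAT$(0)$ cube complex is relatively geometric, completing the proof. The only genuinely new ingredient is the density/approximation argument that translates the Cooper--Futer existence theorem into the Bergeron--Wise separation criterion for the Bowditch boundary; the remaining verifications (codimension $1$, relative quasi-convexity, fullness, and cocompactness via Proposition \ref{p:cocompact}) are formal once the separating surfaces are in hand.
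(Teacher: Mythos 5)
Your proposal matches the paper's argument: the paper likewise deduces Theorem \ref{t:M RG} by observing that the closed quasi-Fuchsian surfaces of Cooper--Futer contain no parabolics, hence give full relatively quasi-convex codimension $1$ subgroups, and that their ubiquity supplies the separation hypothesis of Theorem \ref{t:Rel BW}. The only cosmetic differences are that the paper does not split into closed and cusped cases and applies the ubiquity statement to a pair of planes shielding $u$ and $v$ directly, rather than routing through a separating geodesic and an orthogonal plane as you do.
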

In case $M$ is closed, this result is due to Bergeron--Wise \cite{bergeronwise}, using work of Kahn--Markovic \cite{KM12}.  In the finite-volume case, Cooper and Futer proved that $\pi_1(M)$ admits a proper and cocompact action on a CAT$(0)$ cube complex, using \cite[Theorem 1.2]{CooperFuter} and the results in \cite{bergeronwise}. 

\begin{definition} \cite{CooperFuter}
A collection of immersed surfaces in a hyperbolic $3$--manifold $M$ is {\em ubiquitous} if for any pair of hyperbolic planes $\Pi, \Pi' \subset \mathbb{H}^3$ whose distance $d(\Pi,\Pi')$ is greater than $0$ there is some surface $S$ in the collection with an embedded preimage $\widetilde{S} \subset \mathbb{H}^3$ that separates $\Pi$ from $\Pi'$.
\end{definition}

\begin{theorem} \cite[Theorem 1.1]{CooperFuter} \label{t:CF}
Let $M$ be a complete, finite-volume hyperbolic $3$--manifold.  Then the set of closed immersed quasi-Fuchsian surfaces in $M$ is ubiquitous.
\end{theorem}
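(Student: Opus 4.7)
The plan is, for any pair $\Pi, \Pi' \subset \mathbb{H}^3$ of hyperbolic planes with $d(\Pi,\Pi') = \delta > 0$, to produce a closed immersed quasi-Fuchsian surface $S \subset M$ whose preimage in $\mathbb{H}^3$ has an embedded component $\widetilde{S}$ contained in the tubular neighborhood $N_{\delta/3}(\Pi_0)$ of the midplane $\Pi_0$ halfway between $\Pi$ and $\Pi'$. Such a $\widetilde{S}$ automatically separates $\Pi$ from $\Pi'$. The construction is an adaptation of the Kahn--Markovic good-pants technology to the finite-volume (cusped) setting.

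First, for a large parameter $R$, I would assemble a large supply of immersed \emph{good pants} in $M$: immersed pairs of pants whose three cuffs are closed geodesics of complex length within $O(e^{-qR})$ of $2R$ (for a fixed small $q > 0$), and whose half-pants lift in $\mathbb{H}^3$ to configurations $O(e^{-qR})$--close, in an appropriate frame-bundle sense, to totally geodesic pants. In the closed case, Kahn--Markovic manufacture these in abundance by exploiting exponential mixing of the frame flow on the unit frame bundle of $M$. In the finite-volume case frame-flow mixing still holds on any fixed thick part $M_{\geq \mu}$; the key adaptation is to produce good pants whose closed-geodesic cuffs and supports lie entirely in a slightly larger thick part $M_{\geq \mu/2}$, so that everything used is ``thick.''

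Second, using these good pants, assemble a nearly-totally-geodesic closed immersed surface whose lift tracks $\Pi_0$. Choose a large disk $D \subset \Pi_0$, tile a neighborhood of $D$ by lifts of good pants, and pair up the unmatched cuffs using a Hall-type perfect matching on the good-pants graph of $M_{\geq \mu}$, exactly as in the Kahn--Markovic assembly template. The result is a closed immersed surface $S \subset M$ with a lift $\widetilde{S}$ whose Hausdorff distance from $\Pi_0$ on $D$ is $O(e^{-qR})$. The small bending of $S$ along its cuffs (inherited from the good-pants condition) implies, via the standard bending-implies-quasi-Fuchsian lemma, that $S$ is quasi-Fuchsian and that $\widetilde{S}$ is an embedded quasi-plane globally $O(e^{-qR})$--close to $\Pi_0$. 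Choosing $R$ large enough that $O(e^{-qR}) < \delta/3$ places $\widetilde{S} \subset N_{\delta/3}(\Pi_0)$, establishing ubiquity.

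The principal obstacle is the cusp analysis underlying the first step: one must verify that restricting to good pants supported in the thick part does not destroy the rich supply needed for the matching in step two, since frame-flow correlation estimates degrade near cusps and closed geodesics near $2R$ may be forced to excurse deep into cusp neighborhoods. One must therefore quantify the thick-part contribution to the frame-flow mixing integral, show that the Kahn--Markovic inequality on the bipartite good-pants graph persists in the thick part, and verify that the assembled surface actually closes up (has no cuff-matching defect arising from cusp excursions). This quantitative cusp-avoidance is the main technical content of Cooper--Futer.
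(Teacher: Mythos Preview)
The paper does not prove this theorem. Theorem~\ref{t:CF} is quoted verbatim from Cooper--Futer \cite[Theorem 1.1]{CooperFuter} and used as a black box; the only argument the present paper supplies is the one-line observation that the closed quasi-Fuchsian surfaces in Theorem~\ref{t:CF} contain no parabolics and hence give full relatively quasi-convex subgroups, so that Theorem~\ref{t:Rel BW} applies to yield Theorem~\ref{t:M RG}. There is therefore no proof in this paper to compare your proposal against.

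As a side remark on your sketch relative to the actual Cooper--Futer argument: your outline of the Kahn--Markovic template is broadly right, but the strategy of simply restricting all good pants to a fixed thick part and hoping the matching survives is not what Cooper--Futer do, and it is not clear it can be made to work. A plane $\Pi_0$ that projects deep into a cusp cannot be tracked by pants confined to $M_{\geq \mu/2}$; something must enter the cusp. Cooper--Futer instead build explicit \emph{umbrella} assemblies of good pants that enter a cusp neighborhood, wind around, and return, and they prove a good-pants homology result to guarantee the matching closes up once these cusp pieces are included. So the ``principal obstacle'' you identify is real, but the resolution is a new construction rather than a quantitative restriction to the thick part.
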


Noting that the closed surfaces in Theorem \ref{t:CF} contain no parabolics, and so the corresponding subgroups of $\pi_1(M)$ are full, Theorem \ref{t:M RG} is an immediate consequence of Theorems \ref{t:CF} and \ref{t:Rel BW}.

\section{Criterion for Relative Cannon}
For the convenience of the reader, we recall the statement of Theorem \ref{t:RelHai}.

\relhai*
\begin{proof}
Suppose that $G$ is Kleinian.  Then $G$ admits a relatively geometric action by Theorem \ref{t:M RG} above.  

Conversely, suppose that $G$ admits a relatively geometric action on a CAT$(0)$ cube complex $X$.  Let $\mc{Q}$ be as in the statement of Proposition \ref{prop:fill CAT(0)}.  According to Proposition \ref{prop:fill CAT(0)}, for sufficiently long $\mc{Q}$--fillings $G \to \rightQ{G}{K}$ the quotient space $\leftQ{X}{K}$ is a CAT$(0)$ cube complex.
 
 We consider sufficiently long co-(virtually cyclic fillings), obtained by choosing cyclic subgroups as filling kernels.\footnote{Note that an element $P \in \mc{P}$ fixes a point $\xi_P \in \partial(G,\mc{P})$, and by the dynamical characterization of relatively hyperbolic groups \cite{Yaman} the group $P$ acts properly and cocompactly on $\partial(G,\mc{P}) \smallsetminus \xi_P \cong \mathbb{R}^2$.  It follows that each element of $\mc{P}$ is free abelian of rank $2$.}  According to \cite[Theorem 1.2]{GMS}, for sufficiently long such fillings the quotient $\rightQ{G}{K}$ is a word-hyperbolic group whose (Gromov) boundary is a $2$--sphere.  On the other hand, such a $\rightQ{G}{K}$ acts cocompactly on the CAT$(0)$ cube complex $\leftQ{X}{K}$, with virtually cyclic cell stabilizers.  Since virtually cyclic groups are virtually special, and virtually cyclic subgroups of hyperbolic groups are quasi-convex, it follows from \cite[Theorem D]{Omnibus} that such a $\rightQ{G}{K}$ is a virtually special group (and in particular it is cubulable).  By \cite[Theorem 1.10]{Hais:Cannon}, any such $\rightQ{G}{K}$ is virtually Kleinian.  In fact, since the parabolic subgroups of $G$ are free abelian, $G$ cannot have a finite normal subgroup.  It now follows from \cite[Theorem 7.2]{rhds} that such a $\rightQ{G}{K}$ has no finite normal subgroup, and hence it acts faithfully on its boundary.  Therefore, $\rightQ{G}{K}$ is Kleinian.
 
We now take a longer and longer sequence of fillings of this form, obtaining a collection of hyperbolic quotients $G \twoheadrightarrow G_i = \rightQ{G}{K_i}$ so that each $G_i$ is Kleinian.  As in the proof of \cite[Corollary 1.4]{GMS} we get a sequence of representations $\rho_i \co G \to \mathrm{Isom}(\mathbb{H}^3)$, and exactly as in \cite{GMS} this sequence must converge to a discrete faithful representation of $G$ into $\mathrm{Isom}(\mathbb{H}^3)$, which shows that $G$ is Kleinian, as required.  
\end{proof}

\bibliographystyle{abbrv}

\end{document}